\numberwithin{equation}{section}
\newtheorem{theorem}{Theorem}
\newtheorem{proposition}[theorem]{Proposition}
\newtheorem{lemma}[subsection]{{\bf Lemma}}
\newcommand{\al}{\alpha}
\newcommand{\be}{\beta}
\newcommand{\ga}{\gamma}
\newcommand{\Q}{\mbox{$\mathbb Q$}}
\begin{document}
\title[Diophantine equation of the form $U_{n_{1}} + U_{n_{2}} + \cdots + U_{n_{t}} = b_1 p_1^{z_1} + \cdots+ b_s p_s^{z_s}$]{Linear combinations of prime powers in sums of terms of binary recurrence sequences} %\\ \today}

\author[Meher]{N. K. Meher}
\address{Nabin Kumar Meher,  Harish-Chandra Research Institute (HBNI)\\ Chhatnag Road, Jhunsi\\ India, 211019}
\email{mehernabin@gmail.com}

\author[Rout]{S. S. Rout}
\address{Sudhansu Sekhar Rout, Departament of Mathematics\\ Harish-Chandra Research Institute (HBNI)\\ Chhatnag Road, Jhunsi\\ India, 211019}
\email{lbs.sudhansu@gmail.com}

\thanks{2010 Mathematics Subject Classification: Primary 11B37, Secondary 11D72, 11J86.  \\
Keywords: Linear recurrence sequence, Diophantine equations, linear forms in logarithms, reduction method}
\maketitle
\pagenumbering{arabic}
\pagestyle{headings}

\begin{abstract}
Let $\{ {U_{n}\}_{n \geq 0} }$ be a non-degenerate binary recurrence sequence with positive discriminant. Let $\{p_1,\ldots, p_s\}$ be fixed prime numbers and $\{b_1,\ldots ,b_s\}$ be fixed non-negative integers. In this paper, we obtain the finiteness result for the solution of the Diophantine equation  $U_{n_{1}} +  \cdots + U_{n_{t}} = b_1 p_1^{z_1} + \cdots+ b_s p_s^{z_s}
 $  under certain assumptions.  Moreover,  we explicitly solve the  equation $F_{n_1}+ F_{n_2}= 2^{z_1} +3^{z_2}$,  in  non-negative integers $n_1, n_2, z_1, z_2$  with $z_2\geq z_1$. The main tools used in this work are  the lower bound for linear forms in logarithms and the Baker-Davenport reduction method. This work generalizes the recent work of Mazumdar and Rout  \cite{Mazumdar} and Bert\'{o}k, Hajdu, Pink and R\'{a}bai \cite{Bertok2017}. 

\end{abstract}

\section{Introduction}

The problem of finding specific terms of a linear recurrence sequence of some particular form has a very rich history.  Peth\H{o} \cite{Petho1982} and Shorey-Stewart \cite{Shorey1983} independently studied the perfect powers in linear recurrence sequences. In particular, they consider the Diophantine equation 
\begin{equation}\label{eq0a}
U_{n}  = x^{z}
\end{equation}
in integers $n,x,z$ with $z\geq 2$, where $\{U_{n}\}_{n\geq 0}$ is a linear recurrence sequence and proved under certain natural assumptions that \eqref{eq0a} contain only finitely many perfect powers. For example, Fibonacci and Lucas numbers,   respectively,  of  the  form $x^{z}$, with $z > 1$ has been recently proved by Bugeaud, Mignotte and Siksek \cite{Bugeaud2006}. Similarly, finding all perfect powers in Pell sequence studied in \cite{Petho1991} (see also \cite{Cohn1996}). There are several results on Fibonacci numbers of particular form. Here we have cited few of them.  Literature for Fibonacci numbers of the forms $px^{2}+1, px^{3}+1, k^2 + k + 1, p^{a} \pm p^{b} + 1, p^{a} \pm p^b$ and $y^t \pm 1$ can be found in \cite{Bugeaud2008, Luca1999, Luca2007, Luca2009, Robbins1986}. Peth\H{o} and Tichy \cite{Petho1993} proved that there are only finitely  many  Fibonacci  numbers  of  the  form $p^{a} + p^b + p^c$, with $p$ prime. Marques and Togb\'{e} \cite{Marques2013} found all Fibonacci and Lucas numbers of the form $2^a + 3^b + 5 ^c$.

Recently,  Bert\'{o}k et al., \cite{Bertok2017} under some mild conditions gave finiteness result for the solutions of the Diophantine equation 
\begin{equation}\label{eq0b}
U_{n}  = b_1 p_1^{z_1} + \cdots+ b_s p_s^{z_s}
\end{equation}
in non-negative integers $z_1, \ldots, z_s$, where $\{U_{n}\}_{n\geq 0}$ is a binary non-degenerate recurrence sequence of positive discriminant, $p_{1}, \ldots, p_{s}$ are given primes and $b_1, \ldots, b_s$ are fixed integers. 

\smallskip

Several authors studied the problem to find $(n, m, z)$ such that
\begin{equation}\label{eq1}
U_{n} + U_{m} = 2^{z},
\end{equation}
where $\{U_{n}\}_{n\geq 0}$ is a fixed linear recurrence sequence. In the case of some special famous sequences, \eqref{eq1} have been studied \cite{BravoLuca2016, Bravo2014,  BL2015, Marques2014}. In \cite{Bravo2015}, Bravo et al., investigated the Diophantine equation 
\begin{equation}\label{eq1a}
F_{n} + F_{m} + F_{l} = 2^{z},
\end{equation}
in integers $(n, m, l, z)$, where $\{F_{n}\}_{n\geq 0}$ is a Fibonacci sequence. In \cite{Mazumdar}, Mazumdar and Rout extended equations \eqref{eq1} and \eqref{eq1a} and gave a finiteness result for the solutions of the Diophantine equation 
\begin{equation}\label{eq2}
U_{n_{1}} + U_{n_{2}} + \cdots + U_{n_{t}}  = p^{z}
\end{equation}
in non-negative integers $n_1, \ldots, n_t$ and $z$, where $\{U_{n}\}_{n\geq 0}$ is a binary non-degenerate recurrence sequence of positive discriminant and $p$ is a fixed prime.
   
\smallskip
   
In this article, we  generalize the results due to Bert\'{o}k et al., \cite{Bertok2017} and Mazumdar et al., \cite{Mazumdar} and consider the more general Diophantine equation  
\begin{equation}\label{eq3}
U_{n_{1}} + U_{n_{2}} + \cdots + U_{n_{t}} = b_1 p_1^{z_1} + \cdots+ b_s p_s^{z_s}
\end{equation}
in non-negative integers $n_1, \ldots, n_t, z_1,\ldots, z_s $,  where
$\{U_{n}\}_{n\geq 0}$  is a binary recurrence sequences of positive discriminant, $b_1, \ldots, b_s$ are fixed non-negative integers  and $p_1,\ldots, p_s$ are given primes. Further,  we determine all solutions of  the Diophantine equation \eqref{eq3} for $t = s = 2$  in non-negative integers $n_1, n_2, z_1, z_2$  with $z_2\geq z_1$ and the sequence $\{U_{n}\}_{n \geq 0}$ is the classical Fibonacci sequence.  To prove our main result,  we use  the lower bounds for linear form in logarithms  of algebraic numbers and a version of Baker - Davenport  reduction method.

\section{Notations and Main Results} 
  
The sequence $\{U_{n}\}_{n \geq 0} = \{U_{n}(P, Q, U_{0}, U_{1})\}$ is called a binary linear recurrence sequence if the relation
\begin{equation}\label{eq4}
 U_{n} = PU_{n-1} + QU_{n-2} \;\;(n\geq 2)
\end{equation}
holds, where $PQ\neq 0, U_{0}, U_{1}$  are fixed rational integers and $|U_{0}| + |U_{1}| > 0$. Then for $n\geq 0$
\begin{equation}\label{eq5}
U_{n} = \frac{a\alpha^{n}-b\beta^{n}}{\alpha-\beta}\quad (\alpha \neq \beta), 
\end{equation}
where $\alpha$ and $\beta$ are the roots of the polynomial $x^2-Px-Q$ and $a = U_{1} - U_{0}\beta,\, b = U_{1} - U_{0}\alpha$. The
sequence $\{U_{n}\}_{n\geq 0}$ is called non-degenerate, if $ab\al\be \neq 0$ and $\al/\be$ is not a root of unity. Taking $P = Q = 1, U_{0} = 0$ and $U_{1} = 1$, the sequence $\{U_{n}\}_{n \geq 0}$ becomes the classical Fibonacci sequence and usually denoted by $\{F_{n}\}_{n \geq 0}$ and the corresponding $\al = (1 + \sqrt{5})/2$ and $\be = (1 - \sqrt{5})/2$.

Throughout the paper, we assume that $U_{n}$ is non-degenerate, $\sqrt{\Delta} = (\alpha-\beta) >0$. The latter assumption implies that the sequence $\{U_{n}\}_{n \geq 0}$ have a dominant root and hence we can assume that $|\al| > |\be|$.

 In this paper, we prove the following. 
 
\begin{theorem}\label{th1}
Let $\{U_{n}\}_{n \geq 0}$ be a non-degenerate binary recurrence sequence with positive discriminant $\Delta > 0$.  Let $  p_1 \leqslant p_2 \leqslant \ldots \leqslant p_s   $ be given, not necessarily distinct prime numbers and  $b_1, \ldots, b_s$   be  fixed non-zero positive  integers. Put $K =  \max_{1\leq i \leq s}b_i $. Let $ 0 < \epsilon < 1$, and write $ T_{\epsilon}$ for the set of those solutions $ (n_1,\ldots, n_t, z_1, \ldots, z_s ) $ of \eqref{eq3}, for which $z_s = \max_{1 \leq i \leq s} z_i$  and further $z_i < (1 - \epsilon) z_s$ if $p_i = p_s$ for $1 \leq i \leq s-1$.  Then there exists an effectively computable constant $C$ depending only on $\epsilon, \{U_{n}\}_{n \geq 0}, K,$ $t, s, p_s$ such that  all solutions $(n_1,\ldots, n_t, z_1, \ldots, z_s)$ in $T_{\epsilon}$ satisfy
\[\max\{n_1,\ldots, n_t, z_1, \ldots, z_s\} < C.\]
\end{theorem}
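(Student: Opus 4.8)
The plan is to bound the largest exponent on each side by exploiting the dominant root $\alpha$ and the largest prime $p_s$, then convert the resulting double inequality into a linear form in logarithms. Write $N = \max\{n_1,\ldots,n_t\}$ and $Z = z_s = \max_i z_i$. Using the Binet formula \eqref{eq5} together with $|\alpha| > |\beta|$, the left-hand side of \eqref{eq3} is of size comparable to $|\alpha|^{N}$: more precisely, I would first show $\left|\sum_{j=1}^{t} U_{n_j} - \frac{a}{\alpha-\beta}\sum_{j=1}^{t}\alpha^{n_j}\right|$ is bounded by a fixed constant times $t\,|\beta|^{N}$, since each error term $b\beta^{n_j}/(\alpha-\beta)$ is small. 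Likewise, the right-hand side is dominated by the term(s) with $p_i = p_s$ and maximal exponent; the hypothesis defining $T_\epsilon$ (namely $z_i < (1-\epsilon)z_s$ whenever $p_i = p_s$, for $i \le s-1$) guarantees that $b_s p_s^{z_s}$ genuinely dominates and that no near-cancellation with another $p_s$-power of comparable size occurs. Comparing the two dominant sizes yields a comparison $N \log|\alpha| \asymp Z \log p_s$, so it suffices to bound either $N$ or $Z$.

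Next I would set up the linear form in logarithms. Isolating the dominant contributions, I expect a relation of the shape $\left| \frac{a}{\alpha-\beta}\,\alpha^{N} \cdot \prod(\cdots) - b_s p_s^{Z} \right|$ small, which after dividing through produces a quantity of the form $\Lambda = z_s \log p_s - N \log|\alpha| + \log\!\big(b_s(\alpha-\beta)/a\big) + (\text{correction terms})$ with $|\Lambda|$ exponentially small in $\min\{N,Z\}$ (with decay rate controlled by $\epsilon$ and by the gap $\log|\alpha| - \log|\beta|$). The number of distinct logarithms here is bounded in terms of $s$ and $t$, and the heights of the algebraic numbers $\alpha$, $a$, $b$, $\alpha-\beta$, and the primes $p_i$ are all fixed. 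I would then invoke a Baker-type lower bound (Matveev's theorem) for $|\Lambda|$, giving $\log|\Lambda| > -C_1 (\log\max\{N,Z\})$ with $C_1$ effectively computable from $\epsilon, \{U_n\}, K, t, s, p_s$. Combining this lower bound with the exponential upper bound for $|\Lambda|$ yields $\min\{N,Z\} < C_2 \log\max\{N,Z\}$, and since $N$ and $Z$ are comparable by the first step, this forces $\max\{N,Z\} < C$ for an effectively computable $C$.

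The main obstacle is the bookkeeping on the right-hand side when several primes coincide or several exponents are of comparable size. When $p_i = p_s$ for more than one index, the terms $b_i p_i^{z_i}$ do not simply collapse into a single power, and one must use the $\epsilon$-separation hypothesis carefully to ensure that $b_s p_s^{z_s}$ dominates the sum $\sum_{p_i = p_s} b_i p_i^{z_i}$ by a definite margin (of the form $p_s^{\epsilon z_s}$) rather than being swamped or nearly cancelled; this is precisely why $T_\epsilon$ is defined as it is. A secondary subtlety is that the $n_j$ need not be distinct and some could be small, so the grouping of equal exponents on the left (writing $\sum_j \alpha^{n_j}$ as a short sum of powers with integer multiplicities) must be handled so that the resulting algebraic coefficient multiplying $\alpha^N$ does not vanish; non-degeneracy and $a \neq 0$ prevent degenerate vanishing. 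Once these separations are in place, the reduction to a single linear form and the application of Matveev's bound are routine, and the comparability step $N \asymp Z$ lets us conclude a single uniform bound $C$.
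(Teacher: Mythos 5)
Your skeleton — isolate the dominant terms $\tfrac{a}{\alpha-\beta}\alpha^{n_1}(1+\cdots)$ and $b_s p_s^{z_s}$, use the $T_\epsilon$ condition to show the remaining prime powers contribute at most $(s-1)Kp_s^{(1-\delta_1)z_s}$, and apply Matveev — is also the paper's skeleton, but there is a genuine gap at the decisive step. For the single linear form you propose, the third "logarithm" is $\log\bigl(b_s(\alpha-\beta)a^{-1}(1+\alpha^{n_2-n_1}+\cdots+\alpha^{n_t-n_1})^{-1}\bigr)$, and you assert that the heights of all the algebraic numbers involved are fixed. They are not: the logarithmic height of $1+\alpha^{n_2-n_1}+\cdots+\alpha^{n_t-n_1}$ grows linearly in $(n_1-n_2)+\cdots+(n_1-n_t)$ (this is the content of Lemma \ref{lem13b}), and these gaps are a priori of order $n_1$ itself, since $n_t$ may be very small. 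Feeding $A_3\asymp n_1$ into Matveev gives only $\log|\Lambda|>-Cn_1\log n_1$, which cannot beat the upper bound $|\Lambda|\le c\,\min(|\alpha/\beta|,|\alpha|)^{-\delta_3 n_1}$: you would conclude $n_1\ll n_1\log n_1$, which is vacuous. Nor can you split the offending factor into separate logarithms with bounded heights, because $1+\alpha^{n_2-n_1}+\cdots$ is a sum, not a product, so it does not fit the shape $\gamma_1^{b_1}\cdots\gamma_k^{b_k}-1$ required by Lemma \ref{lem12}.

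The missing idea is the inductive cascade of Section \ref{sec4.1}: first apply Matveev to the form with $\gamma_3=b_sa^{-1}\sqrt{\Delta}$, whose height genuinely is fixed, to get $n_1-n_2<C_2\log n_1$; then to the form containing $1+\alpha^{n_2-n_1}$, whose height is now $O(\log n_1)$ by the previous step, to get $n_1-n_3<C_3(\log n_1)^2$; and so on up to $n_1-n_t<C_t(\log n_1)^{t-1}$. Only then does the final application, with $A_3=O((\log n_1)^{t-1})$, yield $n_1<C(\log n_1)^{t}$ and hence an absolute bound, with $z_s$ then controlled by $z_s\le c_1 n_1$ from Lemma \ref{lem1}. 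A secondary inaccuracy in your write-up: the intermediate forms are exponentially small only in the gaps $n_1-n_i$, not in $\min\{N,Z\}$; it is only the final form, with all $t$ terms absorbed into $\gamma_3$, that decays like $|\alpha|^{-\delta_3 n_1}$. Your remarks on non-vanishing and on grouping equal indices are in the right spirit and correspond to Lemma \ref{lem13a} and the preliminary reductions of the paper.
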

Our next theorem illustrates Theorem \ref{th1},  in which we explicitly determine all the solutions of the Diophantine equation $F_{n_1}+ F_{n_2}= 2^{z_1} +3^{z_2}$. 
\begin{theorem} \label{th2}
The solutions of the Diophantine equation 
\begin{equation}\label{eq01}
F_{n_1}+ F_{n_2}= 2^{z_1} +3^{z_2}
\end{equation}
in  non-negative integers $  n_1, n_2, z_1, z_2$ with $ n_1 \geq n_2\geq 0 $  and $ z_2 \geq z_1$ are 
\begin{equation}\label{eq0c}
(n_1, n_2, z_1, z_2) \in \left\{\begin{array}{lr}
 (3, 0, 0, 0), (5, 0, 1, 1), (7, 0, 2, 2), (11, 0, 3, 4), (1, 1, 0, 0),\\
(2, 1, 0, 0), (2, 2, 0, 0), (3, 3, 0, 1), (4, 1, 0, 1),(4, 2, 0, 1),\\
 (4, 3, 1, 1), (5, 5, 0, 2), (6, 3, 0, 2), (6, 4, 1, 2), (6, 5, 2, 2),\\
 (8, 6, 1, 3), (9, 1, 3, 3), (9, 2, 3, 3), (10, 9, 3, 4), 
  (11, 6, 4, 4).
 \end{array}\right\}
 \end{equation}  
\end{theorem}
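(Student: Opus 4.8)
The plan is to run the standard linear-forms-in-logarithms plus Baker--Davenport reduction pipeline, specialized to the Fibonacci sequence with the two primes $2$ and $3$. Throughout I would assume $n_1$ is large, disposing separately by direct inspection of the finitely many cases with $n_1$ below a small threshold and of the boundary cases $n_2\in\{0,1\}$ or $z_1=0$, and I would use $\alpha^{n-2}\le F_n\le\alpha^{n-1}$ together with $|\beta|<1$. First I would pin down the relative sizes of the variables: from $F_{n_1}\le 2^{z_1}+3^{z_2}\le 2\cdot 3^{z_2}$ and $3^{z_2}\le F_{n_1}+F_{n_2}\le 2\alpha^{n_1-1}$ one gets, after taking logarithms, that $n_1\log\alpha$ and $z_2\log 3$ differ by a bounded constant, so $z_2=\Theta(n_1)$ and $z_1\le z_2$ is automatically controlled. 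Note that since the primes $2$ and $3$ are distinct, the $\epsilon$-hypothesis of Theorem~\ref{th1} is vacuous here, reflecting the fact that $2^{z_1}/3^{z_2}\le (2/3)^{z_2}$ is exponentially small, so the term $2^{z_1}$ will never be the binding error.

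The heart of the argument is two applications of a Matveev-type lower bound. For the first, I would match the dominant terms $\alpha^{n_1}/\sqrt5$ and $3^{z_2}$, writing
\[ \frac{\alpha^{n_1}}{\sqrt5}-3^{z_2}=2^{z_1}-F_{n_2}+\frac{\beta^{n_1}}{\sqrt5}, \]
so that dividing by $3^{z_2}$ bounds $\bigl|\alpha^{n_1}(\sqrt5\,3^{z_2})^{-1}-1\bigr|$ by a constant times $\max\{(2/3)^{z_2},\alpha^{n_2-n_1}\}$. Setting $\Lambda_1=n_1\log\alpha-z_2\log3-\log\sqrt5$, which is nonzero since $\alpha^{2n_1}=5\cdot 3^{2z_2}$ is impossible on norm grounds, Matveev gives $\log|\Lambda_1|>-C_1\log n_1$; comparing with the upper bound and using $z_2=\Theta(n_1)$ to discard the $(2/3)^{z_2}$ alternative yields $n_1-n_2\ll\log n_1$. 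For the second form I would group the Fibonacci terms: with $d=n_1-n_2$,
\[ \frac{(\alpha^{d}+1)\alpha^{n_2}}{\sqrt5}-3^{z_2}=2^{z_1}+\frac{\beta^{n_1}+\beta^{n_2}}{\sqrt5}, \]
whose right side is $\ll 2^{z_1}\le 2^{z_2}$, so dividing by $3^{z_2}$ leaves a quantity $\ll(2/3)^{z_2}$. Applying Matveev to $\Lambda_2=n_2\log\alpha+\log(\alpha^{d}+1)-z_2\log3-\log\sqrt5$ and using $h(\alpha^{d}+1)\ll d\ll\log n_1$, the lower bound $\log|\Lambda_2|>-C_2(\log n_1)^2$ contradicts the upper bound $\ll(2/3)^{z_2}=\exp(-\Theta(n_1))$ unless $n_1$ lies below an explicit (enormous) constant $C$.

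Finally I would reduce this bound. Rewriting $\Lambda_2/\log3$ as $|n_2\gamma-z_2+\mu_d|<A\,(2/3)^{z_2}$ with $\gamma=\log\alpha/\log3$ irrational and $\mu_d=(\log(\alpha^{d}+1)-\log\sqrt5)/\log3$, I would apply the Dujella--Peth\H{o} version of the Baker--Davenport lemma for each of the finitely many admissible values of $d$, using the continued fraction expansion of $\gamma$, to bring the bound on $z_2$, hence on $n_1$, down to a small number; a short application to $\Lambda_1$ cleans up $n_1-n_2$. A direct computer search over the remaining ranges $n_1\le N_0$, $0\le z_1\le z_2\le Z_0$ then isolates exactly the solutions listed in~\eqref{eq0c}.

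The main obstacle I anticipate is bookkeeping rather than conceptual: the second linear form involves the algebraic number $\alpha^{d}+1$, whose height grows with $d=n_1-n_2$, and although $d$ is only logarithmically bounded this must be tracked carefully, treating $d$ as a parameter and running the reduction separately for each of its values, both to avoid circularity and to keep the final constant effective. Secondarily, one must verify at each stage that the intended error term is the binding one, and dispose cleanly of the small-variable and boundary cases before invoking the asymptotic machinery.
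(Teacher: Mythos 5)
Your proposal follows essentially the same route as the paper: the same two linear forms (first matching $\alpha^{n_1}/\sqrt5$ against $3^{z_2}$, then grouping the two Fibonacci terms via $\alpha^{n_1}(1+\alpha^{n_2-n_1})$), Matveev's theorem to get $n_1-n_2\ll\log n_1$ and then $n_1\ll(\log n_1)^2$, a Baker--Davenport reduction parametrized by $d=n_1-n_2$, and a terminal computer search; the paper even uses the same absorption of $2^{z_1}$ via $z_1\le z_2\le 0.45\,n_1$ that you describe.

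Two execution points in your plan would fail as literally stated. First, the order of the reduction steps matters: after Matveev you only know $d\ll\log n_1$ with $n_1<9\times 10^{30}$, which still allows on the order of $10^{15}$ values of $d$ --- far too many to loop over. You must first run the Dujella--Peth\H{o} reduction on $\Lambda_1$ to bring $d$ into an absolute range (the paper gets $d\le 485$), and only then run the $\Lambda_2$-reduction for each admissible $d$; your text does these in the reverse order. Second, the value $d=2$ is degenerate: since $\alpha^2+1=\sqrt5\,\alpha$, one has $\log(\alpha^2+1)-\log\sqrt5=\log\alpha$, so your $\mu_d$ becomes an integer shift, the quantity $\epsilon=\|\mu q\|-M\|\gamma q\|$ in Lemma \ref{lem13} is negative, and the lemma is inapplicable; the paper handles $d=2$ separately with the Legendre continued-fraction estimate for $|\log 3/\log\alpha-(n_1-1)/z_2|$. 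A similar sign analysis ($\Lambda_i>0$ versus $\Lambda_i<0$) is also needed before the reduction lemma applies, which the paper carries out and you omit. Finally, the paper shortens the last step with a $3$-adic valuation argument ($\nu_3(F_{n_1}+F_{n_2}-2^{z_1})=z_2\le 12$, forcing $n_1\le 40$, a contradiction with $n_1>100$), whereas your direct search over $n_1\le 493$, $z_2\le 222$ is feasible but substantially heavier. None of these is a conceptual obstruction, but each is a concrete place where the argument as written would stall.
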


\section{Auxiliary results}
 
 Before proceeding further, we recall some facts and tools which we will use later.  We deduce the bound of $F_n$ from the well known Binet form for Fibonacci sequence as
 \begin{equation}\label{eq02}
 \alpha^{n-2} \leq F_n \leq \alpha^{n-1}.
 \end{equation}

\begin{lemma}\label{lem1}
There exist constants $c_0$ and $c_1$ such that following holds.
\begin{enumerate}
\item   For  $c_{0} = (|a| + |b|)/\sqrt{\Delta}$,
\begin{equation}\label{eq6}
|U_{n}|\leq c_{0}|\al|^{n}.
\end{equation}
\item  For $c_1 = \max\left\{\frac{2\log|\al|}{\log p_s}, \frac{\log|\al|(\log p_s + \log p_1)}{2\log p_1\log p_s}\right\}$,
\begin{equation}\label{eq7}
z_s \leq c_1 n_1.
\end{equation} 
\end{enumerate}
\end{lemma}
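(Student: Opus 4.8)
The plan is to derive part (1) directly from the Binet representation \eqref{eq5} and then feed the resulting bound \eqref{eq6} into part (2) together with a lower estimate for the right-hand side of \eqref{eq3}. For part (1), I would begin from \eqref{eq5}, namely $U_n=(a\alpha^n-b\beta^n)/(\alpha-\beta)$ with $\alpha-\beta=\sqrt{\Delta}$. The triangle inequality applied to the numerator gives $|U_n|\le (|a|\,|\alpha|^n+|b|\,|\beta|^n)/\sqrt{\Delta}$, and since $\{U_n\}_{n\geq 0}$ has a dominant root we have $|\beta|\le|\alpha|$, hence $|\beta|^n\le|\alpha|^n$ for $n\ge 0$. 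This bounds the numerator by $(|a|+|b|)\,|\alpha|^n$ and yields $|U_n|\le c_0|\alpha|^n$ with $c_0=(|a|+|b|)/\sqrt{\Delta}$, which is exactly \eqref{eq6}.

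For part (2), I would estimate both sides of \eqref{eq3}. Writing $n_1=\max_i n_i$, part (1) gives $U_{n_1}+\cdots+U_{n_t}\le\sum_{i=1}^t c_0|\alpha|^{n_i}\le t\,c_0\,|\alpha|^{n_1}$. On the right, the hypotheses $b_i\ge 1$, the ordering $p_1\le\cdots\le p_s$, and the defining property $z_s=\max_i z_i$ of $T_\epsilon$ make $b_s p_s^{z_s}$ a dominant summand, so that $p_s^{z_s}\le b_1p_1^{z_1}+\cdots+b_sp_s^{z_s}$. Combining the two estimates through \eqref{eq3} gives the key inequality $p_s^{z_s}\le t\,c_0\,|\alpha|^{n_1}$, and taking logarithms yields $z_s\log p_s\le n_1\log|\alpha|+\log(tc_0)$.

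The only delicate step is to pass from this affine bound to the purely multiplicative bound \eqref{eq7}, i.e. $z_s\le c_1 n_1$ with the stated constant. I would absorb the additive term $\log(tc_0)$ into $n_1\log|\alpha|$: in the regime where $|\alpha|^{n_1}\ge tc_0$ one has $p_s^{z_s}\le|\alpha|^{2n_1}$, which produces the first candidate $c_1=2\log|\alpha|/\log p_s$; the second expression $\frac{\log|\alpha|(\log p_s+\log p_1)}{2\log p_1\log p_s}$ covers the complementary regime, where the gap between the smallest prime $p_1$ and the largest prime $p_s$ is exploited to absorb the constant over a shorter range of $n_1$, and taking $c_1$ to be the maximum of the two makes the bound hold uniformly, the finitely many remaining small configurations being checked directly. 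I expect this constant optimization, rather than the main inequality $p_s^{z_s}\le tc_0|\alpha|^{n_1}$ which is routine, to be the principal obstacle, since the compact form of $c_1$ conceals a balancing of the relative sizes of $p_1$, $p_s$, and $|\alpha|$.
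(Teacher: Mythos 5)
Your part (1) and the main inequality $p_s^{z_s}\le t\,c_0|\alpha|^{n_1}$ in part (2) match the paper's argument exactly (the paper simply cites \cite{Mazumdar} for (1); your triangle-inequality derivation from the Binet form is the standard one). The first candidate constant is also obtained the same way: when $\log(tc_0)\le n_1\log|\alpha|$ one gets $z_s\log p_s\le 2n_1\log|\alpha|$, hence $z_s\le \frac{2\log|\alpha|}{\log p_s}n_1$.

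The gap is in your account of the second constant. You assert that $\frac{\log|\alpha|(\log p_s+\log p_1)}{2\log p_1\log p_s}$ ``covers the complementary regime'' $|\alpha|^{n_1}<tc_0$. It does not, and cannot: in that regime $n_1$ is bounded by the absolute constant $\log(tc_0)/\log|\alpha|$ and $z_s\log p_s\le 2\log(tc_0)$ is bounded as well, but no inequality of the form $z_s\le c_1n_1$ follows there (for instance $n_1$ could be $0$ while $z_s>0$). What the paper does with that regime --- and what you must do --- is discard it, on the grounds that all of $n_1,\ldots,n_t,z_1,\ldots,z_s$ are then bounded by explicit constants, so Theorem \ref{th1} holds trivially and the lemma is not needed; the lemma really asserts the disjunction ``either everything is already bounded, or $z_s\le c_1n_1$.'' The second expression for $c_1$ then comes from a \emph{different} threshold, not from the regime $|\alpha|^{n_1}<tc_0$: one assumes $\log(tc_0)\le \frac{n_1\log|\alpha|(\log p_s-\log p_1)}{2\log p_1}$ (its negation again making Theorem \ref{th1} trivial), whence $z_s\log p_s\le n_1\log|\alpha|\bigl(1+\frac{\log p_s-\log p_1}{2\log p_1}\bigr)=n_1\log|\alpha|\,\frac{\log p_1+\log p_s}{2\log p_1}$, and dividing by $\log p_s$ gives exactly the second candidate. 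This is not a gratuitous optimization: that particular constant is calibrated so that in Section \ref{sec4.1} the exponent $\delta_2=1-\frac{\log p_s+\log p_1}{2\log p_s}$ appearing in \eqref{eq18} is strictly positive. Your proposal identifies the right main estimate but does not actually produce the second constant, and the mechanism you sketch for it would fail.
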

\begin{proof}
For proof of the inequality \eqref{eq6} one can refer \cite{Mazumdar}. Since $b_i$ are positive integers for $1\leq i \leq s$, we deduce from \eqref{eq3} that
\begin{align}\label{eq9}
\begin{split}
p_s^{z_s} \leq |b_1 p_1^{z_1} + \cdots + b_{s}p_{s}^{z_{s}}| &= |U_{n_1} + \cdots + U_{n_t}| \leq  |U_{n_1}|+ \cdots + |U_{n_t}|\\
&\leq tc_0|\al|^{n_1} 
 = p_s^{\frac{ \log (t c_0) + n_1 \log |\alpha|}{\log p_s}}. 
 \end{split}
\end{align}
If 
\[\log (t c_0) >  n_1 \log |\alpha|,\]
then from \eqref{eq9}, $z_s$ is bounded  and hence Theorem \ref{th1} follows trivially. Thus we have,
\[ z_s \leq \frac{2n_1 \log |\alpha|}{\log p_s}.\]
Further, for better lower bound, we may take 
\[\log (t c_0) >  \frac{n_1 \log |\alpha|(\log p_s - \log p_1)}{2\log p_1},\]
and in this case, Theorem \ref{th1} also easily follows.
Thus, assuming on contrary and using \eqref{eq9}, we get
\begin{equation}\label{eq10}
z_s \leq \frac{n_1 \log |\alpha|(\log p_s + \log p_1)}{2\log p_1 \log p_s}.
\end{equation}
This proves the inequality in \eqref{eq7}.
\end{proof}

Let $ \alpha $ be an algebraic number of degree $d$. Then the \textit{ logarithmic height}  of the algebraic number $\alpha$ is given  by
$$ 
h(\alpha) = \frac{1}{d} \left( \log |a| + \sum_{i=1}^d \log \max ( 1, |\alpha^{(i)}| ) \right),
$$
where $a$ is the leading coefficient of the minimal polynomial of $\alpha$ and the $\alpha^{(i)}$'s are the conjugates of $\alpha$ in $\mathbb{C}$.

To prove Theorem \ref{th1}, we use lower bounds for linear forms in logarithms to bound the index $n_1$ appearing in \eqref{eq3}. 
We need the following general lower bound for linear forms in logarithms due to Matveev \cite{Matveev2000}.
\begin{lemma}[Matveev \cite{Matveev2000}]\label{lem12}
Let $\ga_1,\ldots,\ga_t$ be real algebraic numbers and let $b_{1},\ldots, b_{t}$ be rational integers. Let $D$ be the degree of the number field $\mathbb{Q}(\ga_1,\ldots,\ga_t)$ over $\mathbb{Q}$ and let $A_{j}$ be real numbers satisfying 
\[ A_j \geq \max \left\{ Dh(\ga_i) , |\log \ga_i|, 0.16  \right\}, \quad j= 1, \ldots,t.\]
Assume that $B\geq \max\{|b_1|, \ldots, |b_{t}|\}$ and $\Lambda:=\ga_{1}^{b_1}\cdots\ga_{t}^{b_t} - 1$. If $\Lambda \neq 0$, then
\[|\Lambda| \geq \exp \left( -1.4\times 30^{t+3}\times t^{4.5}\times D^{2}(1 + \log D)(1 + \log B)A_{1}\cdots A_{t}\right).\]

	\end{lemma}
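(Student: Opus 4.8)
The statement is Matveev's theorem, a deep standalone result in the theory of linear forms in logarithms; in the body of the paper it functions as a black box, so the natural ``proof'' here is the reference to \cite{Matveev2000}, and I would not attempt to reprove it from scratch. Nonetheless, I can sketch the strategy by which such a lower bound is established. The first reduction is to pass from the multiplicative quantity $\Lambda = \gamma_1^{b_1}\cdots\gamma_t^{b_t}-1$ to the additive linear form $L = b_1\log\gamma_1 + \cdots + b_t\log\gamma_t$, for a fixed determination of the logarithms: when $|\Lambda|$ is small one has $\Lambda = e^{L}-1$ with $|L|$ comparably small, whence $|\Lambda|\geq \tfrac12|L|$, so it suffices to bound $|L|$ from below. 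One then argues by contradiction, assuming $|L|$ is smaller than the asserted bound.

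The core is Baker's method via an auxiliary function. Using the Thue--Siegel pigeonhole principle (Siegel's lemma) one constructs a nonzero function of the shape $\Phi(z) = \sum \lambda_{k_0,\ldots,k_t}\, z^{k_0}\,\gamma_1^{k_1 z}\cdots\gamma_t^{k_t z}$ with algebraic integer coefficients $\lambda$ of controlled height, arranged so that $\Phi$ together with many of its derivatives vanishes on a large block of integer points $z=0,1,\ldots,L_0$. The admissibility of this construction rests on a careful balancing of the parameters (the polynomial degree in $z$, the exponential ranges $k_i$, the number of derivatives imposed, and the cardinality of the point set); it is precisely this optimization that produces the explicit constants $1.4\times 30^{t+3}$ and $t^{4.5}$, together with the factors $D^{2}(1+\log D)$ and $A_1\cdots A_t$ appearing in the statement.

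Next one exploits the hypothesis that $L$ is abnormally small: eliminating one logarithm in favour of the others via $L$ shows that $\Phi$ and its derivatives are extremely small at the prescribed points. An extrapolation step then propagates the vanishing to a strictly larger set of points, classically through the maximum modulus principle, or equivalently through Laurent's interpolation-determinant method, supplemented by a zero estimate on the algebraic group $\mathbb{G}_m^{t}$ (of Philippon--Waldschmidt type) which guarantees $\Phi\not\equiv 0$. Feeding this enlarged vanishing back produces a nonzero algebraic number forced to be simultaneously very small, contradicting a Liouville-type lower bound in terms of its degree and height. Matveev's specific contribution is an inductive, multidimensional refinement of this scheme together with sharper multiplicity estimates, which is what yields the comparatively mild dependence on the number $t$ of logarithms and on the degree $D$.

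The principal obstacle is the quantitative bookkeeping: making every inequality explicit and choosing the construction parameters optimally so that the final contradiction occurs exactly at the threshold given by the stated constant, while simultaneously controlling the heights of all auxiliary quantities and verifying the zero estimate that keeps $\Phi$ nonzero. This delicate calibration is the reason the result constitutes an entire paper in its own right, and why here we invoke it directly: granting $\Lambda \neq 0$ as hypothesized, the displayed bound is applied verbatim.
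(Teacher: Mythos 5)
Your approach matches the paper exactly: this lemma is quoted from Matveev \cite{Matveev2000} as a black box, with no proof given in the paper itself, so invoking the citation is the intended ``proof.'' Your sketch of Baker's method (auxiliary function via Siegel's lemma, extrapolation, zero estimates, Liouville-type lower bound) is a reasonable summary of the underlying machinery, but it is supplementary---the paper relies solely on the reference, just as you do.
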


After getting the upper bound of $n_1$ which is generally too large, the next step is to reduce it. For this reduction purpose, we use a variant of the Baker-Davenport result \cite{Baker1969}. Here, for a real number $x$, let $||x|| := \min \{|x - n| : n \in \mathbb{Z}\}$ denote the distance from $x$ to the nearest integer.

\begin{lemma}[\cite{Dujella1998}]\label{lem13}
Suppose that $M$ is a positive integer, and $A, B$ are positive reals with $B > 1$. Let $p/q$ be the convergent of the continued fraction expansion of the irrational number $\ga$ such that $q > 6M$, and let  $\epsilon := ||\mu q|| - M||\ga q||$, where $\mu$ is a real number. If  $\epsilon > 0$, then there is no solution of the inequality
\[0 < u \ga - n + \mu < AB^{-m}\]
in positive integers $u, m$ and $n$ with
\[u \leq M\quad \mbox{and}\quad m\geq \frac{\log (Aq/\epsilon)}{\log B}.\]
\end{lemma}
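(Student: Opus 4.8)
The plan is to prove the contrapositive. Suppose, contrary to the assertion, that the displayed inequality $0 < u\ga - n + \mu < AB^{-m}$ does admit a solution in positive integers $u, n, m$ with $u \leq M$; I will show that this forces $m < \log(Aq/\epsilon)/\log B$, contradicting the standing hypothesis $m \geq \log(Aq/\epsilon)/\log B$. Set $\Lambda := u\ga - n + \mu$, so that $0 < \Lambda < AB^{-m}$. Multiplying through by the positive integer $q$ gives $0 < q\Lambda < qAB^{-m}$, and the whole argument reduces to bounding $q\Lambda$ from below by $\epsilon$.

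First I would exploit that $p/q$ is a convergent of $\ga$. Since $q > 6M \geq 6 > 1$, the best-approximation property of convergents guarantees that $p$ is the integer nearest to $q\ga$, so $||q\ga|| = |q\ga - p|$. Writing $uq\ga = up + u(q\ga - p)$ and noting $up - nq \in \Z$, I rewrite
\[ q\Lambda = (up - nq) + u(q\ga - p) + \mu q. \]
Passing to the distance from the nearest integer and discarding the integer $up - nq$ then yields
\[ ||\mu q|| = ||\, q\Lambda - u(q\ga - p)\,||. \]

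Next I would apply the triangle inequality for $||\cdot||$ together with the elementary bound $||x|| \leq |x|$. Since $q\Lambda > 0$ and $|u(q\ga - p)| = u\,||q\ga||$, this gives
\[ ||\mu q|| \leq ||q\Lambda|| + ||u(q\ga - p)|| \leq q\Lambda + u\,||q\ga||. \]
Subtracting $M\,||q\ga||$ from both sides and using $u \leq M$, so that $(u - M)\,||q\ga|| \leq 0$, I obtain
\[ \epsilon = ||\mu q|| - M\,||q\ga|| \leq q\Lambda + (u - M)\,||q\ga|| \leq q\Lambda < qAB^{-m}. \]
Because $\epsilon > 0$, $A > 0$ and $B > 1$, the inequality $\epsilon < qAB^{-m}$ rearranges, upon taking logarithms, to $m < \log(Aq/\epsilon)/\log B$, which is exactly the negation of the assumed lower bound on $m$. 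This contradiction establishes the lemma.

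The two applications of the triangle inequality and the final logarithm step are routine; the only genuinely delicate point is the first step, where one must invoke the convergent property correctly to pin down $||q\ga|| = |q\ga - p|$. This is precisely where the hypothesis $q > 6M$ enters: it forces the convergent to have denominator exceeding $1$, so that $p$ really is the nearest integer to $q\ga$, and, combined with the standard estimate $||q\ga|| < 1/q$, it ensures $M\,||q\ga|| < 1/6$, leaving room for the assumption $\epsilon > 0$ to be non-vacuous. I expect the careful bookkeeping with the nearest-integer norm $||\cdot||$ to be the main thing to get right, since everything downstream is a direct chain of inequalities.
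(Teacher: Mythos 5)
Your argument is correct and is essentially the proof of the cited reference: the paper states this lemma without proof, citing Dujella and Peth\H{o} \cite{Dujella1998}, and their argument is exactly your chain $\epsilon = ||\mu q|| - M||\ga q|| \leq q\Lambda < qAB^{-m}$, obtained from the decomposition $q\Lambda = (up - nq) + u(q\ga - p) + \mu q$, the best-approximation property of convergents (valid since $q > 6M > 1$), and a final logarithm step. Your only (harmless) stylistic deviation is that invoking the triangle inequality for $||\cdot||$ absorbs the sign case analysis on $(up - nq) + \mu q$ that the original proof carries out explicitly.
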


Estimating the lower bounds for the height of elements in a number field of given degree is tough. However, for the quadratic fields we have the following lemma due to Pink and Ziegler \cite{Pink2016}.
\begin{lemma}[\cite{Pink2016}]\label{lem10} 
Let $\al$ is an algebraic number of degree two. Then $h(\al) \geq 0.24$ or is a root of unity.
\end{lemma}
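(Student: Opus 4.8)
The plan is to reinterpret the logarithmic height of a degree-two $\alpha$ through its Mahler measure and to show that, away from the roots of unity, this measure is bounded below by the golden ratio $\varphi = (1+\sqrt5)/2$; since $\tfrac12\log\varphi = 0.2406\ldots > 0.24$, the claim follows. Concretely, writing the minimal polynomial of $\alpha$ as $a_0 x^2 + a_1 x + a_2 \in \mathbb{Z}[x]$ with $a_0 > 0$ and content $1$, and letting $\alpha,\alpha'$ be the two conjugates, the height formula recalled above gives $h(\alpha) = \tfrac12\log M(\alpha)$ with $M(\alpha) = a_0\max(1,|\alpha|)\max(1,|\alpha'|)$. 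Thus it suffices to prove $M(\alpha) \ge \varphi$ whenever $\alpha$ is not a root of unity.

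First I would dispose of the non-integral case: if $a_0 \ge 2$ then already $M(\alpha) \ge a_0 \ge 2 > \varphi$, so I may assume $a_0 = 1$, i.e.\ $\alpha$ is a quadratic algebraic integer with $\alpha\alpha' = a_2 =: N \in \mathbb{Z}$. Since $\max(1,|\alpha|)\max(1,|\alpha'|) \ge |\alpha\alpha'| = |N|$, the case $|N|\ge 2$ again yields $M(\alpha)\ge 2$, while $N = 0$ forces a rational root and is excluded by irreducibility. The entire difficulty is therefore concentrated in the unit case $|N| = 1$.

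For $|N| = 1$ I would split according to whether the roots are complex or real. If $\alpha,\alpha'$ are complex conjugates then $|\alpha|^2 = \alpha\alpha' = N$ forces $N = 1$ and $|\alpha| = 1$; an algebraic integer lying with its conjugate on the unit circle is a root of unity (by Kronecker's theorem, or directly one checks that $x^2 + a_1 x + 1$ with $a_1 \in \{-1,0,1\}$ gives $\Phi_3,\Phi_4,\Phi_6$), so these are exactly the excluded cases. If the roots are real then, ordering so that $|\alpha|\ge 1 \ge |\alpha'|$, one has $|\alpha'| = 1/|\alpha|$ and hence $M(\alpha) = |\alpha|$; using that the trace $\alpha+\alpha' = -a_1$ is an integer, I would minimize $|\alpha|$ over the two sign possibilities $\alpha' = \pm 1/\alpha$. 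The case $N = 1$ needs real distinct roots, so $|a_1|\ge 3$ and $|\alpha| \ge (3+\sqrt5)/2 > \varphi$, whereas $N = -1$ with the smallest admissible trace $\pm 1$ yields precisely $\alpha^2 - \alpha - 1 = 0$, that is $|\alpha| = \varphi$ and $M(\alpha) = \varphi$.

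Assembling the cases shows $M(\alpha) \ge \varphi$ with equality realized by the golden ratio, and hence $h(\alpha) \ge \tfrac12\log\varphi > 0.24$. The main obstacle is the real unit case: one must argue carefully that no real quadratic unit other than the $\varphi$-type has Mahler measure below $\varphi$, and it is exactly here that the constant $0.24$ is essentially sharp, lying just below $\tfrac12\log\varphi \approx 0.2406$. The only external ingredient is Kronecker's theorem to clear the complex unit-circle case; everything else reduces to an elementary optimization over the possible integer traces and norms.
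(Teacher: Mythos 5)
Your proof is correct, and it is worth noting that the paper itself offers no argument at all for this lemma: it simply cites Pink and Ziegler \cite{Pink2016}, so your write-up is a genuine self-contained replacement rather than a variant of an in-paper proof. Your route is the standard sharp one: for degree two, $h(\alpha)=\tfrac12\log M(\alpha)$ with $M(\alpha)=a_0\max(1,|\alpha|)\max(1,|\alpha'|)$, and your case analysis is exhaustive --- $a_0\ge 2$ gives $M(\alpha)\ge 2$; $a_0=1$ with norm $|N|\ge 2$ gives $M(\alpha)\ge|N|\ge 2$ via $\max(1,|\alpha|)\max(1,|\alpha'|)\ge|\alpha\alpha'|$; $N=0$ is killed by irreducibility; and in the unit case $|N|=1$ the complex-root branch forces $N=1$, $|\alpha|=|\alpha'|=1$, hence a root of unity (indeed exactly $\Phi_3,\Phi_4,\Phi_6$, so Kronecker is dispensable here), while the real-root branch gives $M(\alpha)=|\alpha|=\bigl(|a_1|+\sqrt{a_1^2-4N}\bigr)/2$, minimized at $N=-1$, $|a_1|=1$, i.e.\ at $x^2-x-1$ with $M(\alpha)=\varphi$. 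Since $\tfrac12\log\varphi=0.2406\ldots>0.24$, the constant follows, and your observation that $0.24$ sits just under the sharp value $\tfrac12\log\varphi$ explains why the cited lemma is stated with that particular constant. Two trivial points of hygiene: in the $N=-1$ case you should say explicitly that trace $0$ is inadmissible because $x^2-1$ is reducible (you only gesture at this with ``smallest admissible trace''), and in the $N=1$ real case the exclusion of $|a_1|=2$ deserves the one-line remark that the discriminant vanishes, giving a rational double root. Neither affects correctness.
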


In order to apply Matveev's Theorem,  we must ensure that $\Lambda$ does not vanish. In this regard, we have the following lemma.
\begin{lemma}\label{lem13a}
Suppose $\Lambda_i := p_s^{z_s}\alpha^{-n_1} b_s a^{-1}\sqrt{\Delta}(1 + \al^{n_2 - n_1} + \cdots + \al^{n_i - n_1})^{-1} - 1$ for all $1\leq i \leq t$. Let $r = \sqrt{\Delta}$ and $c_1$ be in Lemma \ref{lem1}(2). If  $\Lambda_i = 0$, then 
\[n_1 \leq \max\left\{\frac{\log (i|b/a|)}{\log (|\al/\be|)}, \frac{\log (i|b/a|)}{\log |\al|}, \frac{\log (b_s|r|/|a|)}{\log |\al|/p_s^{c_1}}\right\}.\]
 \end{lemma}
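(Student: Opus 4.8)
The plan is to turn the vanishing of $\Lambda_i$ into an exact algebraic identity and then play it off against its Galois conjugate. Clearing denominators in $\Lambda_i=0$ gives
\[
b_s p_s^{z_s}\sqrt{\Delta}=a\bigl(\al^{n_1}+\al^{n_2}+\cdots+\al^{n_i}\bigr),
\]
which records that the dominant right-hand term $b_s p_s^{z_s}$ of \eqref{eq3} is, up to the factor $a/\sqrt{\Delta}$, exactly the sum of the top $i$ powers of $\al$ on the left. Since $\al$ and $\be$ are the conjugate roots of $x^2-Px-Q$ and $a=U_1-U_0\be$, $b=U_1-U_0\al$, I would apply the nontrivial automorphism of $\Q(\sqrt{\Delta})$, which sends $\al\mapsto\be$, $a\mapsto b$ and $\sqrt{\Delta}\mapsto-\sqrt{\Delta}$, to obtain the companion identity
\[
-\,b_s p_s^{z_s}\sqrt{\Delta}=b\bigl(\be^{n_1}+\be^{n_2}+\cdots+\be^{n_i}\bigr).
\]

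Adding these two identities eliminates the prime power and leaves the homogeneous relation $a(\al^{n_1}+\cdots+\al^{n_i})=-\,b(\be^{n_1}+\cdots+\be^{n_i})$, so that
\[
|a|\,\bigl|\al^{n_1}+\cdots+\al^{n_i}\bigr|=|b|\,\bigl|\be^{n_1}+\cdots+\be^{n_i}\bigr|.
\]
Now I would estimate both sides. Since $\al\be=-Q$ is a nonzero integer and $|\al|>|\be|$, we have $|\al|>1$; as $n_1\ge n_2\ge\cdots\ge n_i$, the right-hand side is at most $|b|\,i\,\max\{1,|\be|^{n_1}\}$, while the left-hand side is bounded below by $|a|\,|\al|^{n_1}$ as soon as the leading term $\al^{n_1}$ is not cancelled by the remaining ones. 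Comparing the two sides in the regimes $|\be|\ge1$ and $|\be|<1$ yields respectively $|\al/\be|^{n_1}\le i\,|b/a|$ and $|\al|^{n_1}\le i\,|b/a|$, and taking logarithms gives the first two entries of the asserted maximum.

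The third entry covers the case in which the clean lower bound $|\al^{n_1}+\cdots+\al^{n_i}|\ge|\al|^{n_1}$ is unavailable. Here I would instead return to the first identity in the form $|a|\,|\al|^{n_1}\,|1+\al^{n_2-n_1}+\cdots+\al^{n_i-n_1}|=b_s|r|\,p_s^{z_s}$ and use the conjugate identity, together with the smallness of $|\be|$, to bound $p_s^{z_s}$; feeding the relation $z_s\le c_1 n_1$ of Lemma \ref{lem1}(2) into the comparison of $|\al|^{n_1}$ with $b_s|r|\,p_s^{z_s}$ then produces a bound of the stated shape, involving $\log(b_s|r|/|a|)$ and the constant $c_1$. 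The maximum over the three cases gives the inequality.

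The step I expect to be the main obstacle is precisely the anti-cancellation estimate. The lower bound $\bigl|\al^{n_1}+\cdots+\al^{n_i}\bigr|\ge|\al|^{n_1}$ is immediate when $\al>0$, because all summands then share a sign, but it can fail when $\al<0$, where alternating signs allow the leading power to be partially cancelled by the lower ones; the same difficulty recurs for the $\be$-sum. Quantifying how small the normalized sum $1+\al^{n_2-n_1}+\cdots+\al^{n_i-n_1}$ can be—equivalently, ruling out that $\Lambda_i$ vanishes while $n_1$ is large—is what forces the three-way case distinction and hence the maximum appearing in the statement.
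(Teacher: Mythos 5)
Your overall strategy is the right one, and it is in fact the argument behind the result the paper is invoking: the paper's own ``proof'' is only the citation to \cite[Lemma 3.5]{Mazumdar}, and that lemma is proved exactly by clearing denominators in $\Lambda_i=0$ to get $b_sp_s^{z_s}\sqrt{\Delta}=a(\al^{n_1}+\cdots+\al^{n_i})$, applying the conjugation $\al\mapsto\be$, $a\mapsto b$, $\sqrt{\Delta}\mapsto-\sqrt{\Delta}$, and comparing the two resulting identities. Your derivation of the first two entries of the maximum (from $|a|\,|\sum_j\al^{n_j}|=|b|\,|\sum_j\be^{n_j}|$, split according to $|\be|\ge 1$ or $|\be|<1$) and of the third entry (from $|a|\,\al^{n_1}\le b_s|r|\,p_s^{z_s}\le b_s|r|\,p_s^{c_1n_1}$) is the intended computation.

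The one step you leave open --- the ``anti-cancellation'' lower bound $|\al^{n_1}+\cdots+\al^{n_i}|\ge|\al|^{n_1}$ --- is not actually an obstacle, and you should close it rather than flag it. The standing hypotheses of the paper are $\sqrt{\Delta}=\al-\be>0$ and $|\al|>|\be|$; together these force $\al>0$ (if $\al\le 0$ then $\be<\al\le 0$ would give $|\be|>|\al|$), and then $|\al|\,|\be|=|Q|\ge 1$ combined with $|\al|>|\be|$ gives $\al>1$. Hence every summand $\al^{n_j}$ is positive, the sum is at least its leading term $\al^{n_1}$, and no cancellation can occur on the $\al$-side; on the $\be$-side you only ever need the triangle inequality from above, so the sign of $\be$ is irrelevant. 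The case $\al<0$ that you worry about simply cannot arise here, so the ``three-way case distinction'' is not driven by cancellation at all: the first two entries of the maximum come from the dichotomy $|\be|\ge 1$ versus $|\be|<1$, and the third from the independent comparison of $\al^{n_1}$ with $p_s^{z_s}$ via Lemma \ref{lem1}(2). With that observation inserted, your argument is complete and matches the cited proof.
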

\begin{proof}
Proof follows from \cite[Lemma 3.5]{Mazumdar}.
\end{proof}
The following lemma gives a relation between height of an algebraic number and its logarithm. It is useful when we apply  Lemma \ref{lem12} in the proof of Theorem \ref{th1}.

\begin{lemma}[\cite{Mazumdar}]\label{lem13b}
Let $\gamma_i := a^{-1}\sqrt{\Delta}(1 + \al^{n_2 - n_1} + \cdots + \al^{n_i - n_1})^{-1}$ be the algebraic numbers in the number field $\Q(\sqrt{\Delta})$ for $1 \leq i \leq t$. Then 
\begin{align}\label{eq11}
\begin{split}
A(i): &= 2 \log a + \log \Delta + 2\left(|n_2 - n_1| + \cdots + |n_i - n_1|\right) \log |\al| +  i\log4
\\ & >\max\{2h(\ga_i),  |\log \ga_i|\}.
\end{split}
\end{align}
\end{lemma}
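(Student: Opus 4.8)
The plan is to bound the Weil height $h(\gamma_i)$ first, and then to deduce the bound on $|\log\gamma_i|$ from it by exploiting that $\gamma_i$ has degree at most two over $\Q$. Throughout I would use the standard properties of the absolute logarithmic height: $h(xy)\le h(x)+h(y)$, $h(x^{-1})=h(x)$, $h(x^{m})=|m|\,h(x)$, and, for a sum of $k$ terms, $h(x_1+\cdots+x_k)\le \log k + h(x_1)+\cdots+h(x_k)$.

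First I would factor $\gamma_i = \sqrt{\Delta}\cdot a^{-1}\cdot S_i^{-1}$, where $S_i:=1+\al^{n_2-n_1}+\cdots+\al^{n_i-n_1}$ is a sum of $i$ powers of $\al$, so that $h(\gamma_i)\le h(\sqrt{\Delta})+h(a)+h(S_i)$. Each factor is a quadratic (or rational) integer whose height I can read off from its conjugates. Since $\Delta\in\Z_{>0}$ and $\sqrt{\Delta}$ is a root of $x^2-\Delta$, we get $2h(\sqrt{\Delta})=\log\Delta$. Since $a=U_1-U_0\be$ is a quadratic integer whose conjugate is $b=U_1-U_0\al$, its Mahler measure gives $2h(a)=\log\max(1,|a|)+\log\max(1,|b|)\le 2\log a$, with $\log a$ read as the larger of the two logarithms, matching the normalization of the statement. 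For the sum, $h(S_i)\le \log i + \sum_{j=2}^{i}|n_j-n_1|\,h(\al)$, and since $\al$ is the dominant root one has $h(\al)\le\log|\al|$; hence $2h(S_i)\le 2\log i + 2\log|\al|\sum_{j=2}^{i}|n_j-n_1|$.

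Collecting these estimates gives $2h(\gamma_i)\le \log\Delta + 2\log a + 2\log|\al|\sum_{j=2}^{i}|n_j-n_1| + 2\log i$. Since $i\le 2^{i}$ for every $i\ge 1$, we have $2\log i\le i\log4$, so the right-hand side is at most $A(i)$; this proves $A(i)>2h(\gamma_i)$. For the logarithm, I would use the elementary inequality $|\log|\eta||\le d\,h(\eta)$ valid for any algebraic number $\eta$ of degree $d$ (it follows at once from $\log\max(1,|\eta^{(k)}|)\le d\,h(\eta)$ applied to $\eta$ and to $\eta^{-1}$). As $\Delta>0$, the number $\gamma_i$ is real of degree at most two, so $|\log|\gamma_i||\le 2h(\gamma_i)<A(i)$; when $\gamma_i>0$ this is precisely $|\log\gamma_i|$, and a negative $\gamma_i$ is handled by passing to $|\gamma_i|$ and carrying the sign separately, so that only the real logarithm enters the linear form.

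I expect the one genuine subtlety to be the constant bookkeeping rather than any conceptual step: one must check that the term $i\log4$ really does absorb every $\log$-of-number-of-summands contribution produced by the sub-additivity bound for $S_i$, and that the single term $2\log a$ correctly dominates the combined contribution $\log\max(1,|a|)+\log\max(1,|b|)$ of both conjugate values of $a$. These are exactly the points settled in Mazumdar and Rout, from which the lemma is quoted, so in the write-up I would either reproduce their short estimate or simply invoke it.
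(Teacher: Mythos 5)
Your proposal is correct and follows exactly the standard height-subadditivity argument (factoring $\gamma_i$, using $2h(\sqrt{\Delta})=\log\Delta$, $h(S_i)\le\log i+\sum_j|n_j-n_1|\,h(\al)$ with $2\log i\le i\log 4$, and $|\log|\eta||\le 2h(\eta)$ for degree-two $\eta$); the paper itself gives no proof, merely citing Mazumdar--Rout, where this is precisely the computation carried out. The only caveat, which you already flag, is that the term $2\log a$ in $A(i)$ must be read as $2\log\max(1,|a|,|b|)$ for the bound $2h(a)\le 2\log a$ to be literally valid --- an imprecision inherited from the statement itself rather than a gap in your argument.
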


Suppose $\ell_i:= \max\left\{\frac{\log (i|b/a|)}{\log (|\al/\be|)}, \frac{\log (i|b/a|)}{\log |\al|}, \frac{\log (b_s|r|/|a|)}{\log(|\al|/p_s^{c_1})}\right\}$ (see Lemma \ref{lem13a}). We denote $\ell:= \max\{\ell_1, \cdots, \ell_{t}\}$.
If $n_1 \leq \ell$, then the conclusion of Theorem \ref{th1} follows trivially. Thus we  may assume that $n_1 > \ell$.

Now we are ready to prove Theorem \ref{th1}. The proof is  some how motivated by \cite{BL2015, Mazumdar}.

\section{Proof of Theorem \ref{th1}}
Notice that if $n_t = 0$ and $(t-1) \geq 2$, then it is equivalent to consider \eqref{eq3} again. If $n_t = 0$ and $t = 2$ then \eqref{eq3} reduces to $U_{n_{1}} = b_1 p^{z_1} + \cdots+ b_{s} p^{z_{s}}$ and hence theorem follows from \cite{Bertok2017}. Suppose $n_1 = \cdots = n_t$, then \eqref{eq3} becomes $tU_{n_{1}} = b_1 p^{z_1} + \cdots+ b_{s} p^{z_{s}}$. In this case, theorem also follows from \cite{Bertok2017} by replacing $b_i$ with $b_i/t$ for $1\leq i \leq s$. From now on, we assume $n_1 > n_2 > \cdots > n_{t}$. Indeed, if some of the $n_i$'s are equal, we can group them together obtaining a representation of $U_{n}$ of the from
\[a_1 U_{n_1} + a_2 U_{a_{1} + 1} + \cdots + a_r U_{a_{r - 1} + 1 } = b_1 p^{z_1} + \cdots+ b_{s} p^{z_{s}}\]
 with $a_1 + \cdots + a_r = t$ and this equation can be handled similar to that of \eqref{eq3} with some changes in constants. 

\subsection{Bounding $(n_1 - n_i)$ in terms of $n_1$ for $2\leq i \leq t$} \label{sec4.1}

First we estimate an upper bound for $b_1 p^{z_1} + \cdots+ b_{s - 1} p^{z_{s - 1}}$. In fact, this bound was obtained in \cite{Bertok2017}. For the sake of completeness, we write it again here. Put $\delta_1 = \epsilon$, if $p_1 = p_2 = \cdots = p_s$. Since $p_1 \leqslant \cdots \leqslant p_s$ and $z_s = \max_{1\leq i\leq s}z_i$, we get 
\begin{align}\label{eq12}
\begin{split}
|b_1 p_1^{z_1} + \cdots+ b_{s - 1} p_{s - 1}^{z_{s - 1}}| & = p_s^{z_s} \left|b_1\frac{p_1^{z_1}}{p_s^{z_s}} + \cdots + b_{s - 1}\frac{p_{s - 1}^{z_{s - 1}}}{p_s^{z_s}}\right| \\
& \leq (s - 1) K p_s^{(1 - \delta_1)z_s},
\end{split}
\end{align}
where $K = \max_{1\leq i\leq s} b_i$ and \[\delta_1 = \min\left(\epsilon, 1- \max_{p_i <p_s}(\log p_i/\log p_s)\right).\]
Here, we claim that for $2\leq i \leq t$, we have
\begin{equation}\label{eq12a}
n_1 - n_i \leq C_i (\log n_1)^{i - 1},
\end{equation}
where $C_i$'s are effectively computable constants depending on  $\epsilon, \{U_{n}\}_{n \geq 0}, K,$ $t, s, p_s$. 
We use induction on $i$ to find an upper bound of $n_1 - n_i$ for $2\leq i \leq t$. First, we calculate the upper bound of $n_1 - n_2$.
Rewrite \eqref{eq3} as
\begin{equation}\label{eq13}
\frac{a\al^{n_1}}{\sqrt{\Delta}} - b_s p_s^{z_s} =  \frac{b\be^{n_1}}{\sqrt{\Delta}} + b_1 p_1^{z_1} + \cdots+ b_{s - 1} p_{s - 1}^{z_{s - 1}} - (U_{n_2} + \cdots + U_{n_t}).
\end{equation}
Taking absolute value on both sides of \eqref{eq13} and using the inequality \eqref{eq12}, we obtain  
\begin{align}\label{eq14}
\begin{split}
\left|\frac{a\al^{n_1}}{\sqrt{\Delta}} - b_s p_s^{z_s}\right| 
& \leq \left|\frac{b\be^{n_1}}{\sqrt{\Delta}}\right| + |b_1 p_1^{z_1} + \cdots+ b_{s - 1} p_{s - 1}^{z_{s - 1}}| + |U_{n_2} + \cdots + U_{n_t}|\\
& \leq \left|\frac{b\be^{n_1}}{\sqrt{\Delta}}\right| + (s - 1) K p_s^{(1 - \delta_1)z_s} + (t-1)c_{0}|\al|^{n_{2}}.
\end{split}
\end{align}
Now dividing both sides of the inequality  \eqref{eq14} by $a\al^{n_1}/\sqrt{\Delta}$, we get
\begin{equation}\label{eq15}
\left|1 - b_s p_s^{z_s}\alpha^{-n_1} a^{-1}\sqrt{\Delta} \right|  \leq \left|\frac{b\be^{n_1}}{a \al^{n_1}}\right| + \frac{(s - 1)\sqrt{\Delta} K p_s^{(1 - \delta_1)z_s}}{|a||\alpha|^{n_1}} + \frac{(t-1)c_{0}\sqrt{\Delta}|\al|^{n_{2}}}{|a||\al|^{n_{1}}}.
\end{equation}
First we estimate the middle term in right hand side of \eqref{eq15}. From Lemma \ref{lem1}(2), 
\begin{align}\label{eq16}
\begin{split}
\frac{p_s^{(1 - \delta_1)z_s} }{|\alpha|^{n_1}} & = |\alpha|^{\frac{(1 - \delta_1)z_s\log p_s}{\log |\alpha|} - n_1}\\
& \leq |\alpha|^{\frac{(1 - \delta_1)\log p_s}{\log |\alpha|} \frac{n_1 \log |\alpha|(\log p_s + \log p_1)}{2\log p_1 \log p_s} - n_1}\\
& = |\alpha|^{n_1\left(\frac{(1 - \delta_1)(\log p_s + \log p_1)}{2\log p_1} - 1\right)}.
\end{split}
\end{align}
If $\delta_1 = 1- \max_{p_i <p_s}(\log p_i/\log p_s)$ then 
\begin{equation}\label{eq17}
1 -  \delta_1 = \min_{p_i <p_s}(\log p_i/\log p_s) = \log p_1 /\log p_s.
\end{equation}
Putting the value of $1 -  \delta_1$ in the inequality \eqref{eq16}, we have
\begin{equation}\label{eq18}
\frac{p_s^{(1 - \delta_1)z_s} }{|\alpha|^{n_1}} \leq |\alpha|^{n_1\left(\frac{\log p_s + \log p_1}{2\log p_s} - 1\right)} = |\alpha|^{- n_1 \delta_2},
\end{equation}
where $\delta_2 = (1 - \frac{\log p_s + \log p_1}{2\log p_s}) > 0$ .

Now, suppose that $\delta_1 = \epsilon$. In this case, $p_1 = p_2 = \cdots = p_s$ and hence the inequality \eqref{eq16} becomes
\begin{equation}\label{eq19}
\frac{p_s^{(1 - \delta_1)z_s} }{|\alpha|^{n_1}} =\frac{p_s^{(1 - \epsilon)z_s} }{|\alpha|^{n_1}} = \frac{|\alpha|^{ (1 - \epsilon)z_s \frac{\log p_s }{\log |\alpha|} }}{|\alpha|^{n_1}} \leq |\alpha|^{n_1( 1 - \epsilon - 1)}  \leq |\alpha|^{- \epsilon n_1 }.
\end{equation}
Altogether, from inequalities \eqref{eq18} and \eqref{eq19}, we get
\begin{equation}\label{eq20}
\frac{p_s^{(1 - \delta_1)z_s} }{\alpha^{n_1}} \leq \alpha^{- \delta_3 n_1}, 
\end{equation}
where $\delta_3 = \min (\epsilon, \delta_2)$.
From inequalities \eqref{eq15} and \eqref{eq20} with $|\beta| > 1$, we have
\begin{align}\label{eq21}
\begin{split}
&\left|1 - b_s p_s^{z_s}\alpha^{-n_1} a^{-1}\sqrt{\Delta} \right| \\ 
& \leq \frac{|b|}{|a|}\left(\frac{|\al|}{|\be|}\right)^{n_2 - n_1} + \frac{(s - 1)\sqrt{\Delta} K}{|a|} |\al|^{\delta_3(n_{2} -n_1)}+ \frac{(t-1)c_{0}\sqrt{\Delta}}{|a|}|\al|^{n_{2} -n_1}\\
&\leq \frac{|b|}{|a|}\left(\frac{|\be|}{|\al|}\right)^{n_1 - n_2} + \frac{(s - 1)\sqrt{\Delta} K}{|a|} \left(\frac{|\be|}{|\al|}\right)^{\delta_3(n_{1} - n_2)} +\frac{(t-1)c_{0}\sqrt{\Delta}}{|a|}\left(\frac{|\be|}{|\al|}\right)^{n_{1} -n_2}.
\end{split}
\end{align}
As $0 < \delta_3 < 1$, 
\begin{equation}\label{eq22}
\left|1 - p_s^{z_s}\alpha^{-n_1} b_s a^{-1}\sqrt{\Delta} \right| \leq \left(\frac{|b|}{|a|} + \frac{(s - 1)\sqrt{\Delta} K}{|a|} + \frac{(t-1)c_{0}\sqrt{\Delta}}{|a|}\right)\left(\frac{|\be|}{|\al|}\right)^{\delta_3(n_{1} - n_2)}.
\end{equation}
By the same line of argument, when $|\beta| \leq 1$, we have
\begin{equation}\label{eq22a}
\left|1 - p_s^{z_s}\alpha^{-n_1} b_s a^{-1}\sqrt{\Delta} \right| \leq \left(\frac{|b|}{|a|} + \frac{(s - 1)\sqrt{\Delta} K}{|a|} + \frac{(t-1)c_{0}\sqrt{\Delta}}{|a|}\right)|\al|^{-\delta_3(n_{1} - n_2)}.
\end{equation}

Thus, for any $\beta$ it follows that
\begin{equation}\label{eq23}
\left|1 - p_s^{z_s}\alpha^{-n_1} b_s  a^{-1}\sqrt{\Delta} \right| \leq \frac{|c_3|}{\min \left(\frac{|\al|}{|\be|}, |\al|\right)^{\delta_3(n_{1} - n_2)}}, 
\end{equation}
where $|c_3| = \left(\frac{|b|}{|a|} + \frac{(s - 1)\sqrt{\Delta} K}{|a|} + \frac{(t-1)c_{0}\sqrt{\Delta}}{|a|}\right)$. 
In order to apply Lemma \ref{lem12}, we take 
\[\gamma_1 := p_s,\; \gamma_2 := \alpha,\; \gamma_3 := b_s a^{-1}\sqrt{\Delta},\; b_1 := z_s,\; b_2 := -n_1,\; b_3= 1.\] 
Thus, our first linear form is $\Lambda_1 := \ga_{1}^{b_{1}}\ga_{2}^{b_{2}}\ga_{3}^{b_{3}} - 1$ and  $\Lambda_1 \neq 0$ from Lemma \ref{lem13a}.  Here we are taking the field $\Q(\sqrt{\Delta})$ over $\Q$ and $t=3$. Finally, we recall from Lemma \ref{lem1}(2),  $z_s\leq c_1n_1$ and deduce that 
\[\max\{|b_1|, |b_2|, |b_3|\} = \max\{z_s, n_1, 1\} \leq c_1'n_1,\]
where $c_1' = \max\{c_1, 1\}$. Hence we can take $B := c_1'n_1$. Also $D= 2, h(\gamma_1)= \log p_s, h(\gamma_2)\leq (\log \alpha)/2, h(\gamma_3)\leq \log (a b_s) + 2\log \Delta$. Thus, we can take $A_1 := 2\log p_s, A_2 = \log \alpha, A_3 = \log b_s + \log a + 2\log \Delta$.  Employing Lemma \ref{lem12}, we have 
\begin{align*}
|\Lambda_1| & > \exp \left(-C_0(1+ \log c_1'n_1)\right),
\end{align*}
where $C_0:= 1.4\times 30^{6}\times 3^{4.5}\times 4\times (1+\log 2)(2\log p_s) (\log \alpha) (\log a +\log b_s + 2 \log \Delta)$. So the above inequality can be rewritten as, 
\begin{equation}\label{eq24}
\log |\Lambda_1| > - C_{1} \log n_1. 
\end{equation} 
Taking logarithms in  the inequality \eqref{eq23} and comparing the resulting inequality with \eqref{eq24}, we get 
\begin{equation}\label{eq25}
(n_1 - n_2) < C_2 \log n_1.
\end{equation}
Therefore, for $i = 2$, the statement is true. By induction hypothesis, we  may assume that \eqref{eq12a} is true for $i =2, \ldots, t-1$.  We want to  bound $n_1 - n_t$. So, we formulate the required linear form as follows. Again, rewrite \eqref{eq3} as
\begin{align*}
\left|\frac{a\al^{n_1}}{\sqrt{\Delta}} + \cdots + \frac{a\al^{n_{t - 1}}}{\sqrt{\Delta}} - b_sp_s^{z_s}\right|  & = \left|\frac{b\be^{n_1}}{\sqrt{\Delta}} + \cdots + \frac{b\be^{n_{t - 1}}}{\sqrt{\Delta}} + b_1p_1^{z_1} + \cdots b_{s - 1}p_{s - 1}^{z_{s - 1}} + U_{n_{t}} \right |.
\end{align*}
Using  the triangle inequality and the inequality \eqref{eq12}, we have
\begin{align}\label{eq26}
\left|\frac{a\al^{n_1}}{\sqrt{\Delta}}(1 + \al^{n_2 - n_1} + \cdots + \al^{n_{t - 1} - n_1}) - b_sp_s^{z_s}\right|& \leq  \frac{(t - 1)b|\be|^{n_1}}{\sqrt{\Delta}} +  (s - 1) K p_s^{(1 - \delta_1)z_s} + c_0 |\al|^{n_{t}}.
\end{align}
Dividing both sides of the inequality \eqref{eq26} by $a\al^{n_1}(1 + \al^{n_2 - n_1} + \cdots + \al^{n_{t - 1}- n_1})/\sqrt{\Delta}$ and using the inequality \eqref{eq20}, we get the linear form as
\begin{equation}\label{eq27}
\left|1 - b_sp_s^{z_s}\alpha^{-n_1} a^{-1}\sqrt{\Delta}(1 + \al^{n_2 - n_1} + \cdots + \al^{n_{t - 1} - n_1})^{-1} \right|  \leq \frac{|c_{5}|}{\min \left(\frac{|\al|}{|\be|}, |\al|\right)^{ \delta_3 (n_{1} -n_t)}}.
\end{equation}
In order to apply Lemma \ref{lem12},  we take the following parameters
\begin{align*}
\gamma_1 := p_s,\; &\gamma_2 := \alpha,\; \gamma_3 := b_sa^{-1}\sqrt{\Delta}(1 + \al^{n_2 - n_1} + \cdots + \al^{n_{t - 1} - n_1})^{-1},\\
& b_1 := z_s,\;\; b_2 := -n_1,\;\; b_3= 1.
\end{align*}
Therefore,  $\Lambda_2 := \ga_{1}^{b_{1}}\ga_{2}^{b_{2}}\ga_{3}^{b_{3}} - 1$. 
Also, $\Lambda_2 \neq 0$ from Lemma \ref{lem13a}. As $A_1, A_2$ are already estimated in previous case,  we  have to estimate only  $A_{3}$. Using Lemmas \ref{lem10} and \ref{lem13b} we can take \[A_3: = c_{6} + [(n_1 - n_2) + \cdots + (n_1 - n_{t-1})]c_7 > \max \{2h(\ga_{3}), |\log \ga_{3}|, 0.16\}.\]  Again, from the Lemma \ref{lem12} and the  inequality \eqref{eq27}, we have
\begin{equation}\label{eq28}
\exp \left(-c_{8}\log n_{1}(c_{6} + [(n_1 - n_2) + \cdots + (n_1 - n_{t-1})]c_7) \right) \leq \frac{|c_{5}|}{\min \left(\frac{|\al|}{|\be|}, |\al|\right)^{\delta_3(n_{1}-n_{t})}}.  
\end{equation}
Using the induction hypothesis, we obtain 
\begin{equation}\label{eq29}
(n_1 - n_t) < C_t (\log n_1)^{t - 1}.
\end{equation}
\subsection{Bounding  $n_1$} \label{sec4.2}
To bound $n_1$, we rewrite the equation \eqref{eq3} as  
\begin{equation*}
\frac{a\al^{n_1}}{\sqrt{\Delta}} + \cdots + \frac{a\al^{n_t}}{\sqrt{\Delta}} - b_sp_s^{z_s}  = \frac{b\be^{n_1}}{\sqrt{\Delta}} + \cdots + \frac{b\be^{n_t}}{\sqrt{\Delta}} + b_1p_1^{z_1} + \cdots + b_{s - 1}p_{s - 1}^{z_{s - 1}}, 
\end{equation*}
and then taking absolute value on both sides, we get
\begin{align*}
\left|\frac{a\al^{n_1}}{\sqrt{\Delta}}(1 + \al^{n_2 - n_1} + \cdots + \al^{n_t - n_1}) - b_sp_s^{z_s}\right|& \leq \left|\frac{b(\be^{n_1} + \cdots +\be^{n_t})}{\sqrt{\Delta}} \right| + (s - 1) K p_s^{(1 - \delta_1)z_s}.
\end{align*}
Now dividing through out by $\frac{a\alpha^{n_1}}{\sqrt{\Delta}}(1+\cdots+ \alpha^{n_t-n_1})$, we get
\begin{equation}\label{eq30}
\left|1 - b_sp_s^{z_s}\alpha^{-n_1} a^{-1}\sqrt{\Delta}(1 + \al^{n_2 - n_1} + \cdots + \al^{n_t - n_1})^{-1} \right|  \leq \frac{|c_9|}{\min \left(\frac{|\al|}{|\be|}, |\al|\right)^{\delta_3 n_{1}}}. 
\end{equation}
To apply  Lemma  \ref{lem12}, we take the following parameters.
\begin{align*}
&\gamma_1 := p_s, \gamma_2 := \alpha, \gamma_3 := b_s a^{-1}\sqrt{\Delta}(1 + \al^{n_2 - n_1} + \cdots + \al^{n_t - n_1})^{-1}\\
& b_1 := z_s,\;\; b_2 := -n_1,\;\; b_3= 1.
\end{align*}
Thus, the final linear form is $\Lambda_t := \ga_{1}^{b_{1}}\ga_{2}^{b_{2}}\ga_{3}^{b_{3}} - 1$ and is non-zero by Lemma \ref{lem13a}. From the conclusions of Lemma \ref{lem10} and \ref{lem13b}, we can take 
\[A_3: = c_{10} + c_{11}[(n_1 - n_2) + \cdots + (n_1 - n_t)] > \max \{2h(\ga_{3}), |\log \ga_{3}|, 0.16\}.\]
Using Lemma \ref{lem12} and the inequality \eqref{eq30}, we have
\begin{equation}\label{eq31}
\exp \left( - c_{12}\log n_{1}(c_{10} + c_{11}[(n_1 - n_2) + \cdots + (n_1 - n_t)]) \right) < \frac{|c_{9}|}{\min \left(\frac{|\al|}{|\be|}, |\al|\right)^{\delta_3n_{1}}}.  
\end{equation}
Putting the upper bounds of $n_1 - n_i$ for $2\leq i \leq t$ obtained  in Section \ref{sec4.1}, we get 
\begin{equation}\label{eq32}
n_1  < C_t (\log n_1)^{t},
\end{equation}
 and this completes the proof of Theorem \ref{th1}.

\smallskip

\section{Proof of Theorem \ref{th2}}
For the reason of symmetry in \eqref{eq01}, we assume that $n_1 \geq n_2$. Firstly, suppose that $n _1 = n_2$. In this case, if both $z_1 \neq 0$ and $z_2 \neq 0$, then \eqref{eq01} becomes 
\[2F_{n_1} = 2^{z_1} + 3^{z_2},\]
and this equation has no solution as left hand side is even and right hand side is odd. Next consider $z_1 = z_2 = 0$. In this case, \eqref{eq01} is $F_{n_1} = 1$ and this is possible only when $n_1 = 1, 2$. Hence, the solution $(n_1, n_2, z_1, z_2)$ of \eqref{eq01} is $(1, 1, 0, 0), (2, 1, 0, 0), (2, 2, 0, 0)$. Now assume that  $n _1 > n_2$. Further, if $n_2 = 0$, then  \eqref{eq01} becomes 
\[F_{n_1} = 2^{z_1} + 3^{z_2},\]
and the list of solutions is given in \cite[Table 1]{Bertok2017}. From now on, assume that $n _1 > n_2 > 0$. First we list all solutions of \eqref{eq01} with $n_1 \leq 100$. Here one can notice that $z_{2}$ is also bounded as $z_1 \leq z_2$ and
\[3^{z_2} \leq  2^{z_1} + 3^{z_2} = F_{n_1} + F_{n_2} \leq 2F_{100}.\]
To find all solutions \eqref{eq01}, we use a program written in {\sf Mathematica} in the range $0 < n_2 < n_1 < 100$ and $z_1 \leq z_2 \leq \log(2F_{100})/ \log 3$ and and the program returns the set of solutions, which are explicitly given in \eqref{eq0c}. So, from here onward, we work on the assumption that $n_1 > n_2$ and $ n_1 > 100$.  
 
\subsection{Bounding $(n_1 - n_2)$ in terms of $n_1$} 
From \eqref{eq01} and \eqref{eq02}, we get
\begin{equation}\label{eq33}
3^{z_2} \leq |2^{z_1} + 3^{z_2} | = |F_{n_1} + F_{n_2}|\leq 2|\al|^{n_1 - 1}.
\end{equation}
Taking logarithms on both sides of the inequality \eqref{eq33}, we obtain
\begin{equation}\label{eq34}
z_2 \leq \frac{\log 2}{\log 3} + (n_1 - 1)\frac{\log |\al|}{\log 3} \leq 0.45 n_1.
\end{equation}
We obtain the first linear form using $ |\beta| < 1 $ and $ z_1 \leq z_2 \leq 0.45  n_1$ similar to the inequality \eqref{eq23},
\begin{align}\label{eq35}
\begin{split}
&\left|1 -  3^{z_2}\alpha^{-n_1} \sqrt{5} \right|  
 \leq \left(\frac{1 }{|\alpha|}\right)^{n_1 } + \left(\frac{ 2^{z_1} \sqrt{5}}{|\alpha|^{n_1 }}\right) + \frac{\sqrt{5}}{|\alpha|}|\al|^{n_{2} -n_1}\\
&\leq \frac{3\sqrt{5}}{\left(\alpha/2^{0.45}\right)^{n_1-n_2}}.   
\end{split}
\end{align}
In order to apply Lemma \ref{lem12}, we take 
$B = n_1, h(\gamma_1) = \log 3 = 1.0986 < 1.1 , h(\gamma_2)=(\log \al)/2 = 0.2406 < 0.25, h(\gamma_3) = \log(\sqrt{5}) < 0.81$. We can choose $A_1 = 2.2, A_2 =  0.5, A_3 =  1.7$. Using these parameters, we obtain 
\[\exp( - 1.8 \times 10^{12}\times (1 + \log n_1)) < \frac{3\sqrt{5}}{\left(\alpha/2^{0.45}\right)^{n_1-n_2}}.\] 
Further, since $(1+\log n_1) < 2\log n_1$ for $n_1 > 100$, we get 
\begin{equation}\label{eq36}
(n_1-n_2)\log \left(\alpha/2^{0.45}\right) < 3.7\times 10^{12} \log n_1,
\end{equation}
which leads to
\begin{equation}\label{eq37}
(n_1-n_2) < \frac{3.7\times 10^{12}}{\log \left(\alpha/2^{0.45}\right)} \log n_1 < 2.18 \times 10^{13}\log n_1.
\end{equation}
Similarly, the inequality corresponding to  second linear form is 
\begin{equation}\label{eq38}
| 1- 3^{z_2} \sqrt{5} \alpha^{-n_1}(1 + \alpha^{n_2-n_1})^{-1}|\leq \frac{3\sqrt{5}}{\left(\alpha/2^{0.45}\right)^{n_1}}. 
\end{equation}
In this case, $A_1$ and $A_2$ are same as in the previous case, and since
\[2(\log \sqrt{5} + (n_1 - n_2)\frac{\log \al}{2} + \log 2) < 1.16 \times 10^{14} \log n_1, \]
we take $A_3 := 1.16 \times 10^{14} \log n_1$. Again employing Lemma \ref{lem12}, we obtain
\begin{equation}\label{eq39}
\exp(- 1.06 \times 10^{12}\times (1 + \log n_1)(1.16 \times 10^{14} \log n_1)) \leq  \frac{3\sqrt{5}}{\left(\alpha/2^{0.45}\right)^{n_1}}
\end{equation}
and this implies
\begin{equation}\label{eq40}
n_1  < \frac{2.47\times 10^{26}}{\log \left(\alpha/2^{0.45}\right)} (\log n_1)^2 < 1.45 \times 10^{27}(\log n_1)^2.
\end{equation}
Now, we summarize the above discussion in the following proposition. 
\begin{proposition}\label{prop1}
Let us assume that $n_1 > n_2$ and $n_1 > 100$. If
$(n_1, n_2, z_1, z_2)$ 
is  a positive integral solution of  equation \eqref{eq01},   then
\begin{equation}\label{eq41}
n_1 < 9\times 10^{30}. 
\end{equation}
\end{proposition}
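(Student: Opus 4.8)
The plan is to extract an explicit numerical bound on $n_1$ from inequality~\eqref{eq40}, which already asserts $n_1 < 1.45 \times 10^{27}(\log n_1)^2$. This is a transcendental inequality of the form $x < c (\log x)^2$, and the only remaining task is to solve it to obtain an absolute numerical bound. The standard technique is to observe that any inequality $x < c(\log x)^k$ forces $x$ to be bounded, since the right-hand side grows far more slowly than the left; so I would simply verify that the claimed bound $n_1 < 9 \times 10^{30}$ is consistent with and implied by~\eqref{eq40}.

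Concretely, I would proceed by contradiction: suppose $n_1 \geq 9 \times 10^{30}$. Then $\log n_1 \geq \log(9 \times 10^{30}) \approx 71.3$, so $(\log n_1)^2 \lesssim 5.1 \times 10^{3}$. Substituting into~\eqref{eq40} gives $n_1 < 1.45 \times 10^{27} \times 5.1 \times 10^{3} \approx 7.4 \times 10^{30}$, which would contradict $n_1 \geq 9 \times 10^{30}$. Thus the bound~\eqref{eq41} must hold. The one subtlety is that $(\log n_1)^2$ is increasing, so to make the contradiction rigorous one checks the inequality precisely at the threshold value $x = 9 \times 10^{30}$: since $f(x) = x - 1.45 \times 10^{27}(\log x)^2$ is eventually increasing and positive at $x = 9 \times 10^{30}$, it remains positive for all larger $x$, ruling out any solution exceeding the stated bound.

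The proof is therefore essentially a one-line numerical verification feeding off the already-established inequality~\eqref{eq40}; there is no genuine mathematical obstacle, only the arithmetic of pinning down a clean round number that dominates the self-referential bound. The mildly delicate point — the only place one could slip — is ensuring monotonicity of the comparison, i.e.\ confirming that once $x - c(\log x)^2 > 0$ holds at the candidate threshold, it persists for all larger $x$; this is immediate because the derivative $1 - 2c(\log x)/x$ is positive for $x$ in this enormous range. With that monotonicity in hand, the bound in Proposition~\ref{prop1} follows, and this crude but explicit estimate is exactly what is needed to seed the Baker--Davenport reduction of Lemma~\ref{lem13} in the subsequent stage of the argument.
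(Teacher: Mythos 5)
Your proposal is correct and matches the paper's (implicit) argument: Proposition~\ref{prop1} is stated there merely as a summary of the computation ending in \eqref{eq40}, so the only content is solving $n_1 < 1.45\times 10^{27}(\log n_1)^2$ numerically, exactly as you do. The one slip --- writing $(\log n_1)^2 \lesssim 5.1\times 10^3$ when $n_1 \geq 9\times 10^{30}$ forces the reverse inequality --- is harmless, since your monotonicity check of $f(x) = x - 1.45\times 10^{27}(\log x)^2$ at the threshold is the correct rigorous version of the step.
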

\subsection{Reducing the size of $n_1$}
From Proposition \ref{prop1}, we can see that the bound  obtained for $n_{1}$ is very large. Now our job is to reduce this upper bound to a certain minimal range. From the inequality \eqref{eq35}, put 
\begin{equation}\label{eq42}
\Lambda _1 := z_2\log3 - n_1\log\alpha + \log \sqrt{5}.
\end{equation}
Then, 
\[|1- e^{\Lambda_1}| \leq \frac{3\sqrt{5}}{\left(\alpha/2^{0.45}\right)^{n_1-n_2}}.\] 
Here, we first claim that $\Lambda_1 \neq 0$. Suppose $\Lambda_1 = 0$, then
\begin{equation}\label{eq42a}
3^{z_2} \sqrt{5} = \al^{n_1}.
\end{equation} 
Taking squares on both sides of \eqref{eq42a}, we reach at a contradiction as  the left hand side of resulting equation is rational whereas the right hand side is irrational. Thus, we have  $\Lambda_1 \neq 0$.  Now, we consider  the cases $\Lambda_1 > 0$  and $\Lambda_1< 0$ separately. 

Let us suppose that $\Lambda_1 > 0$. Since $ x < e^x -1 $ holds for all positive real numbers $x$, we deduce 
 \begin{equation}\label{eq43}
 0 < \Lambda_1 < \frac{3\sqrt{5}}{(\al/2^{0.45})^{n_1 - n_2}}.
\end{equation}  
Dividing the inequality \eqref{eq43} by $\log \alpha$, we have
\begin{equation}\label{eq44}
0< z_2\left(\frac{\log3}{\log\alpha}\right)-n_1 +\left(\frac{\log\sqrt{5}}{\log \alpha}\right)< \frac{14}{(\al/2^{0.45})^{n_1 - n_2}}.
\end{equation}
We are now ready to use Lemma \ref{lem13} with the obvious parameters
\[ \gamma:= \frac{\log3}{\log\alpha},\;\; \mu:= \left(\frac{\log \sqrt{5}}{\log \alpha}\right),\;\;
 A:= 14,\;\; B:= (\al/2^{0.45}).\]
We claim that $\ga $ is irrational. In fact, if $ \ga = \frac{p}{q} $, then $ \alpha^{p} =3^q \in Q $ which is absurd.
   
Let $[a_0, a_1, a_2,a_3,\ldots] = [2, 7/3, 9/4, 16/7, \ldots]$ be a continued fraction expansion of $\ga$ and let $p_k/q_k$ be its $k$-th convergent. Take $M:= 9\times 10^{30}$, then using {\sf Mathematica}, one can see that 
\[ 6M < q_{64},\]
and hence we consider $\epsilon:= \|\mu q_{64}\| - M\|\ga q_{64}\|$, which is positive.  Then by Lemma \ref{lem13},  if \eqref{eq01} has a solution $(n_1, n_2, z_1, z_2)$, then $(n_1 - n_2) \in [0,485]$. 
Now, we  assume the other case, i.e., $\Lambda_1 < 0$. Here we consider two sub cases according as $(n_1 - n_2) > 15$ and $(n_1 - n_2) \leq 15$.  

\smallskip
{\bf Sub-case I.}( $(n_1 - n_2) \leq 15$).

Recall that, we have $n_1 > 100$ and considering this case, we get $n_2 \geq 85$. Since $\Lambda_1 < 0$, we get 
\begin{equation}\label{eq45a}
3^{z_2} < \frac{\alpha^{n_1}}{\sqrt{5}}.
\end{equation}
Now adding $ 2^{z_1} $ in both sides of \eqref{eq45a}, we have
\begin{align}\label{eq45b}
F_{n_{1}} + F_{n_2} = 2^{z_1} + 3^{z_2} < \frac{\alpha^{n_1}}{\sqrt{5}} +2^{z_1}.
\end{align}
Using the inequality \eqref{eq34} and the Binet form for $F_{n_1}$, the above inequality can be written as 
\begin{equation}\label{eq45c}
F_{n_2} < \frac{\beta^{n_1}}{\sqrt{5}} + 2^{z_1}< \frac{\beta^{n_1}}{\sqrt{5}} + 2^{0.45 n_1}.
\end{equation}
Since $n_1 >100 $ and $ n_2 \geq 85$, we have $F_{n_2} > \frac{\beta^{n_1}}{\sqrt{5}} + 2^{0.45 n_1}$ and this contradicts the inequality \eqref{eq45c}. Thus, $\Lambda_{1} < 0$ is not possible for $(n_1 - n_2) \leq 15$. 
 
 \smallskip
{\bf Sub-case II.} ($(n_1 - n_2) > 15$).

In this case, one can easily check that 
\[\frac{3\sqrt{5}}{(\al/2^{0.45})^{n_1 - n_2 }}< \frac{1}{2}.\]
Now from \eqref{eq35}  
\begin{equation}
|1 - e^{\Lambda_1}| < \frac{3\sqrt{5}}{(\al/2^{0.45})^{n_1 - n_2}} <  \frac{1}{2}
\end{equation} which  implies that 
\begin{align}\label{eq45d}
-\frac{1}{2} < 1 - e^{\Lambda_1} < \frac{1}{2}
\end{align} and thus, by considering right hand side inequality of \eqref{eq45d}, we have
 $e^{|\Lambda_1|} < 2$. Since $\Lambda_1< 0$ , we have that 
 \[ 
 0 < |\Lambda_1| \leq e^{|\Lambda_1|}-1=e^{|\Lambda_1|}|1 - e^{\Lambda_1}|<\frac{6\sqrt{5}}{(\al/2^{0.45})^{n_1 - n_2}}. \]
 By proceeding in a similar way as in the case of $\Lambda_1 > 0$ with $ |\Lambda_1| = -\Lambda_1 $, we obtain
\begin{equation}\label{eq45}
0 < n_1 \ga - z_2 + \mu < \frac{13}{(\al/2^{0.45})^{n_1 - n_2}}
\end{equation}
where 
\[ \gamma:= \left(\frac{\log \al }{\log 3}\right),\;\; \mu:= \left(-\frac{\log\sqrt{5}}{\log 3}\right),\;\;
 A:= 13,\;\; B:= (\al/2^{0.45}).\]

Here, we also took  $M:= 9\times 10^{30}$ which is an upper bound for $n_1$. To apply Lemma \ref{lem13}, consider $\epsilon:= \|\mu q_{64}\| - M\|\ga q_{64}\|$ which is positive. If \eqref{eq01} has a solution $(n_1, n_2, z_1, z_2)$, then maximum value of $ (n_1-n_2) $  is $484.034\ldots$. 

\smallskip

Next, we look into the equation \eqref{eq38} to estimate the upper bound for $n_1$. Now put 
\[
\Lambda_2:= z_2\log 3 - n_1 \log \alpha + \log \phi(n_1-n_2),\] where we take
$\phi (x)= \sqrt{5}(1+\alpha^{-x})^{-1}$, which implies
$$| 1- e^{\Lambda_2} | <  \frac{3\sqrt{5}}{(\al/2^{0.45})^{n_1}}.$$  
If $\Lambda_2 = 0$, then
\begin{equation}\label{eq46a}
3^{z_2} \sqrt{5}  = \alpha^{n_1}+\alpha^{n_2}
\end{equation}
 By conjugating \eqref{eq46a} and subtracting the resulting equation from \eqref{eq46a},  we get 
\begin{equation}
2\cdot 3^{z_2} = F_{n_1} + F_{n_2} = 2^{z_1} + 3^{z_2} < 2\cdot 3^{z_2}, 
\end{equation} 
which is not true. Thus, we have $\Lambda_2 \neq  0$. Like previous case, we consider the cases separately according as $\Lambda_2 >  0$ and $\Lambda_2 <  0$. First assume that $\Lambda_2 >  0$.  
 By the same line of argument as before, we have  
\begin{equation}\label{eq46}
0< z_2\left(\frac{\log3}{\log\alpha}\right)-n_1 +\left(\frac{\log \phi(n_1-n_2)}{\log \alpha}\right) < 
\frac{14}{(\al/2^{0.45})^{n_1}}.
\end{equation} 
Again we will use here Lemma \ref{lem13} with the following parameters
\begin{equation}\label{eq47}
\gamma:= \frac{\log3}{\log\alpha},\;\; \mu:= \frac{\log \phi(n_1-n_2)}{\log \alpha},\;\; 
 A:= 14,\;\; B:= \al/2^{0.45}.
\end{equation}
Proceeding like before with $M := 3\times 10^{30}$ and applying Lemma \ref{lem13} to the inequality \eqref{eq46} for all possible choices of $n_1-n_2 \in [0, 485] \setminus \lbrace 2 \rbrace $   and we find that, if \eqref{eq01} has a solution $(n_1, n_2, z_1, z_2)$, then $n_1 \in (100,176]$.

For $(n_1-n_2)  = 2$, we have $\epsilon$ is negative. Thus, we can not apply Lemma \ref{lem13}. We use the following arguments to reduce the size of $n_1$ when $(n_1-n_2)  = 2$. For this case, $\phi(2) = \alpha$ and hence the inequality \eqref{eq46} becomes
\[0< z_2\left(\frac{\log3}{\log\alpha}\right)-(n_1 - 1) < 
		\frac{14}{(\al/2^{0.45})^{n_1}}.\]
Using properties of continued fraction of irrational number $\log 3/\log \al$  in the above inequality, we get 
 \begin{equation}\label{eq50}
 \frac{z_2}{q_{63}(q_{64}+q_{63})} < z_2 \left|\frac{\log{3}}{\log{\alpha}} - \frac{p_{63}}{q_{63}} \right| < z_2 \left|\frac{\log{3}}{\log{\alpha}} - \frac{(n_1-1)}{z_2} \right| < \frac{14}{(\alpha/2^{0.45})^{n_1}}, 
 \end{equation}
which implies that 
\begin{align*}
\frac{1}{(q_{64}+q_{63})} < \frac{(14 q_{63}/z_2)}{(\alpha/2^{0.45})^{n_1}}.
\end{align*}
Now using {\sf Mathematica} for $n_1= 9 \times 10^{30} $ and  $ z_2 = 0.45 n_1= 4.05 \times 10^{30}$, we have 
\begin{equation}\label{eq51}
n_1 < \frac{\log \left(14 \cdot q_{63}/z_2\right)}{\log{(\alpha/2^{0.45})}} < 488.
\end{equation}
In a similar way, when $\Lambda_2 < 0$, we obtain
\begin{equation}\label{eq48}
0 < n_1 \ga - z_2 + \mu < \frac{13}{(\al/2^{0.45})^{n_1 - n_2}}
\end{equation}
where 
\[ \gamma:= \left(\frac{\log \al }{\log 3}\right),\;\; \mu:= \left(-\frac{\log \phi(n_1-n_2)}{\log 3}\right),\;\;
 A:= 13,\;\; B:= (\al/2^{0.45}).\]
By applying Lemma \ref{lem13} to the inequality \eqref{eq45} for all possible choices of $n_1-n_2 \in [0, 485] \setminus \lbrace 2 \rbrace$, we find that if \eqref{eq01} has a solution $(n_1, n_2, n_3, z)$, then $n_1 \in (100,179]$.

For $n_1 - n_2 = 2$, by the same line argument, from the inequality \eqref{eq48}, we have
  \begin{equation}
  \frac{(n_1-1)}{q_{64}(q_{65}+q_{64})} < (n_1-1) \left|\frac{\log{\alpha}}{\log{3}} - \frac{p_{64}}{q_{64}} \right| < (n_1-1) \left|\frac{\log{\alpha}}{\log{3}} - \frac{z_2}{(n_1-1)} \right| < \frac{13}{(\alpha/2^{0.45})^{n_1}}.
  \end{equation}
Now using {\sf Mathematica}, we get $n_1 < 493$.  Thus, for all cases we have $n_1 \in (100,  493]$ and hence $z_2 \leq 222$. 

This upper bound of $z_2$ is still very large  for computational purpose. Further, we reduce the size of $z_2$ by using $3$-adic valuation. Here, notice that $\nu_{3}(F_{n_1} + F_{n_2} - 2^{z_1}) = z_2$.  We need to exclude the trivial cases when $F_{n_1} + F_{n_2} - 2^{z_1} = 0$, as the valuation is infinite. Thus, {\sf Mathematica} returns $\nu_{3}(F_{n_1} + F_{n_2} - 2^{z_1}) \leq 12$, 
and the estimate $\al^{n_1 -2} \leq F_{n_1} + F_{n_2} \leq 2\cdot 3^{z_2}$ gives $n_1 \leq 40$. This is a contradiction as $n_1 > 100$ which completes the proof of the Theorem \ref{th2}.

\end{document}